\theoremstyle{plain}
\newtheorem{Thm}{Theorem}
\newtheorem{Lem}{Lemma}
\theoremstyle{definition}
\theoremstyle{remark}
\def\1{{\bf 1}}
\def\jacob #1#2{\genfrac{(}{)}{}{}{#1}{#2}}
\def\pmod #1{\ ({\rm{mod}}\ #1)}
\begin{document}

\medskip
\title{On the divisibility of the  truncated hypergeometric function ${}_{3}F_2$}
\author{Hao Pan}
\address{Department of Mathematics, Nanjing University, Nanjing 210093,
People's Republic of China}
\email{haopan79@yahoo.com.cn}
\author{Yong Zhang}
\address{Department of Basic Course, Nanjing Institute of Technology, Nanjing 211167,
People's Republic of China}
\email{yongzhang1982@163.com}
\keywords{}
\subjclass[2010]{Primary 11A07; Secondary 33C05,33C20}
\begin{abstract}
We prove a result on the divisibility of the  truncated hypergeometric function ${}_{3}F_2\Big[\genfrac{}{}{0pt}{}{\alpha}{}\genfrac{}{}{0pt}{}{\beta}{1}\genfrac{}{}{0pt}{}{1-\alpha-\beta}{1}\Big|1\Big]$.
\end{abstract}

\maketitle

\vskip 10pt

The hypergeometric function ${}_{q+1}F_q\Big[\genfrac{}{}{0pt}{}{\alpha_1}{}\genfrac{}{}{0pt}{}{\alpha_2}{\beta_1}\genfrac{}{}{0pt}{}{\ldots}{\ldots}\genfrac{}{}{0pt}{}{\alpha_{q+1}}{\beta_q}\Big|x\Big]$  is given by
$$
{}_pF_q\bigg[\begin{matrix}\alpha_1&\alpha_2&\ldots&\alpha_p\\\beta_1&\beta_2&\ldots&\beta_q\end{matrix}\bigg|x\bigg]=\sum_{k=0}^\infty\frac{(\alpha_1)_k(\alpha_2)_k\cdots (\alpha_p)_k}{(\beta_1)_k(\beta_2)_k\cdots (\beta_q)_k}\cdot\frac{x^k}{k!},
$$
where 
$$
(\alpha)_k=\begin{cases} \alpha(\alpha+1)\cdots(\alpha+k-1),\quad&\text{if }k\geq 1,\\
1,&\text{if }k=0.
\end{cases}
$$

Define the truncated hypergeometric function 
$${}_pF_q\bigg[\begin{matrix}\alpha_1&\alpha_2&\ldots&\alpha_p\\\beta_1&\beta_2&\ldots&\beta_q\end{matrix}\bigg|x\bigg]_n=
\sum_{k=0}^{n-1}\frac{(\alpha_1)_k(\alpha_2)_k\cdots (\alpha_p)_k}{(\beta_1)_k(\beta_2)_k\cdots (\beta_q)_k}\cdot\frac{x^k}{k!}.
$$
In \cite{M}, Mortenson studied the arithmetical  properties of ${}_{2}F_1\Big[\genfrac{}{}{0pt}{}{m/d}{}\genfrac{}{}{0pt}{}{(d-m)/d}{1}\Big|x\Big]$,
where $1\leq m<d$ and $p$ is a prime with $p\equiv1\pmod{d}$. With help of the Gross-Koblitz formula, he proved that for prime $p\geq 5$,
$$
{}_2F_1\bigg[\begin{matrix}1/2&1/2\\&1\end{matrix}\bigg|1\bigg]_p\equiv\jacob{-1}{p}\pmod{p^2},\qquad{}_2F_1\bigg[\begin{matrix}1/3&2/3\\&1\end{matrix}\bigg|1\bigg]_p\equiv\jacob{-3}{p}\pmod{p^2},
$$$$
{}_2F_1\bigg[\begin{matrix}1/4&3/4\\&1\end{matrix}\bigg|1\bigg]_p\equiv\jacob{-2}{p}\pmod{p^2},\qquad{}_2F_1\bigg[\begin{matrix}1/6&5/6\\&1\end{matrix}\bigg|1\bigg]_p\equiv\jacob{-1}{p}\pmod{p^2},
$$
where $\jacob{\cdot}{p}$ denotes the Legendre symbol modulo $p$. Subsequently, Sun \cite{S} gave the elementary proofs for the above congruences. However, in fact, in \cite{S2}, Sun proved that
$$
{}_2F_1\bigg[\begin{matrix}\alpha&1-\alpha\\&1\end{matrix}\bigg|1\bigg]_p\equiv (-1)^{\{\alpha\}_p-1}\pmod{p^2},
$$
where $\{\alpha\}_p$ denotes the least non-negative residue of $\alpha$ modulo $p$.

Motivated by the above results, in this note, we shall study the truncated hypergeometric function ${}_{3}F_2\Big[\genfrac{}{}{0pt}{}{\alpha}{}\genfrac{}{}{0pt}{}{\beta}{1}\genfrac{}{}{0pt}{}{1-\alpha-\beta}{1}\Big|1\Big]$.
\begin{Thm}\label{t2} Suppose that $p$ is an odd prime and $\alpha,\beta$ are $p$-integral. 
If $\{\alpha\}_p,\{\beta\}_p\geq 1$ and $\{\alpha\}_p+\{\beta\}_p\leq p$,
then
$$
{}_3F_2\bigg[\begin{matrix}\alpha&\beta&1-\alpha-\beta\\&1&1\end{matrix}\bigg|1\bigg]_p\equiv0\pmod{p^2}.
$$
\end{Thm}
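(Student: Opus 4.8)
The plan is to treat the truncated sum as a two‑variable polynomial and to expand it $p$‑adically about the integer point $(a,b)$, where $a=\{\alpha\}_p$, $b=\{\beta\}_p$ and $m=a+b-1\le p-1$. Write
$$
\Phi(\alpha,\beta)={}_3F_2\bigg[\begin{matrix}\alpha&\beta&1-\alpha-\beta\\&1&1\end{matrix}\bigg|1\bigg]_p=\sum_{k=0}^{p-1}\frac{(\alpha)_k(\beta)_k(1-\alpha-\beta)_k}{(k!)^3}.
$$
Since $k!$ is a $p$‑adic unit for $0\le k\le p-1$, $\Phi$ lies in $\Z_{(p)}[\alpha,\beta]$; expanding it in powers of $\alpha-a,\beta-b$ (which have $p$‑integral coefficients) and using that $\alpha-a,\beta-b$ are divisible by $p$ kills every term of total order $\ge 2$, so
$$
\Phi(\alpha,\beta)\equiv \Phi(a,b)+(\alpha-a)\,\partial_\alpha\Phi(a,b)+(\beta-b)\,\partial_\beta\Phi(a,b)\pmod{p^2}.
$$
The series is Saalschützian, so on the line $\alpha+\beta=a+b$ we have $1-\alpha-\beta=-m$, the sum terminates at $k=m$, and Pfaff–Saalschütz gives $\Phi=\frac{(1-\alpha)_m(1-\beta)_m}{m!\,(1-\alpha-\beta)_m}=\binom{\alpha-1}{m}^2$. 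In particular $\Phi(a,b)=\binom{a-1}{m}^2=0$ because $m\ge a$.

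Next I would extract the first‑order data. Differentiating the closed form along that same line gives $\frac{d}{d\alpha}\binom{\alpha-1}{m}^2=\partial_\alpha\Phi-\partial_\beta\Phi$, which vanishes at $\alpha=a$ since $\binom{a-1}{m}=0$; hence $\partial_\alpha\Phi(a,b)=\partial_\beta\Phi(a,b)$ \emph{exactly}. Feeding $\Phi(a,b)=0$ and this equality back into the Taylor congruence yields $\Phi(\alpha,\beta)\equiv\big((\alpha-a)+(\beta-b)\big)\,\partial_\alpha\Phi(a,b)\pmod{p^2}$, and $(\alpha-a)+(\beta-b)$ is divisible by $p$. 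Thus the whole theorem collapses to the single assertion $\partial_\alpha\Phi(a,b)\equiv0\pmod p$. Differentiating the summand term by term (the factor $(1-\alpha-\beta)_k$ has a simple zero at $(a,b)$ once $k\ge a+b$) produces the explicit shape
$$
\partial_\alpha\Phi(a,b)=\sum_{k=0}^{a+b-1}\frac{(a)_k(b)_k(1-a-b)_k}{(k!)^3}\Big(\sum_{j=0}^{k-1}\frac1{a+j}-\sum_{j=0}^{k-1}\frac1{1-a-b+j}\Big)+(-1)^{a+b}(a+b-1)!\sum_{k=a+b}^{p-1}\frac{(a)_k(b)_k\,(k-a-b)!}{(k!)^3}.
$$

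The hard part is proving that this quantity is divisible by $p$. I would first discard every term in which $(a)_k$ or $(b)_k$ is divisible by $p$ (that is, $k>p-\max(a,b)$), collapsing the tail to the window $a+b\le k\le p-\max(a,b)$, and use $1-a-b\equiv p+1-a-b\pmod p$ to recast the first sum with the fully symmetric weight $(a)_k(b)_k(c)_k/(k!)^3$, where $c=p+1-a-b$. In this form the congruence is exactly the statement that the three partial derivatives of the symmetric sum $\sum_{k=0}^{p-1}(x)_k(y)_k(z)_k/(k!)^3$ all agree modulo $p$ at $(a,b,1-a-b)$; the equality of two of them is the identity established above, so only the transverse one is left. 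This is the genuine obstacle, because it is precisely the derivative across the Saalschützian locus, where Pfaff–Saalschütz no longer evaluates the series. I expect to settle it by direct evaluation modulo $p$, using the reflection $\sum_{i=1}^{p-1-k}1/i\equiv\sum_{i=1}^{k}1/i\pmod p$, Wolstenholme‑type harmonic congruences, and the harmonic‑weighted Pfaff–Saalschütz identities obtained by differentiating the summation theorem in an auxiliary upper/lower parameter pair; essentially all of the difficulty of the theorem is concentrated in this final mod‑$p$ computation.
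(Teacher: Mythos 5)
Your reduction is correct and attractive: writing $a=\{\alpha\}_p$, $b=\{\beta\}_p$, $m=a+b-1\le p-1$, the Taylor expansion modulo $p^2$, the Pfaff--Saalsch\"utz evaluation $\Phi=\binom{\alpha-1}{m}^2$ on the line $\alpha+\beta=a+b$ (valid because the series then terminates at $k=m\le p-1$, so truncation is harmless), the vanishing $\Phi(a,b)=\binom{a-1}{m}^2=0$, and the exact identity $\partial_\alpha\Phi(a,b)=\partial_\beta\Phi(a,b)$ obtained by differentiating along that line all check out. This correctly collapses the theorem to the single congruence $\partial_\alpha\Phi(a,b)\equiv 0\pmod p$, and it is in spirit parallel to the paper's own reduction (the paper first disposes of integer parameters with $\alpha+\beta\ge p$ via Saalsch\"utz, getting $\binom{\alpha+\beta}{\alpha}^2\equiv0\pmod{p^2}$, and then shows invariance of the sum mod $p^2$ under $\beta\mapsto\beta+sp$, which likewise boils down to one mod-$p$ statement about a first-order term).

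The problem is that you do not prove that congruence; you only name tools you ``expect'' to settle it with. That residual statement is not a routine verification --- it is where essentially all the content of the theorem lives, as you yourself observe (``the derivative across the Saalsch\"utzian locus, where Pfaff--Saalsch\"utz no longer evaluates the series''). In the paper this step consumes two further applications of the Saalsch\"utz identity, an index-shift exploiting $\alpha+\beta\ge p$, and the separate Lemma~1, a nontrivial binomial identity
$$
\sum_{j=0}^{\alpha-\beta}\frac{1}{j+\beta}\binom{-\alpha-1}{j}\binom{\alpha-\beta}{j}
=\sum_{j=0}^{\beta-1}\frac{(-1)^{1+\alpha}}{j+1+\alpha-\beta}\binom{-\alpha-1}{j}\binom{\beta-1}{j},
$$
proved by a double induction. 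Your sketch of the endgame (restricting to the window $a+b\le k\le p-\max(a,b)$, symmetrizing to $(a)_k(b)_k(c)_k/(k!)^3$ with $c=p+1-a-b$, and hoping the three partial derivatives agree mod $p$ by harmonic-sum reflections and differentiated Saalsch\"utz identities) is plausible as a program but is not carried out, and it also glosses over $p$-adic issues in the logarithmic-derivative form (individual terms $1/(a+j)$ with $a+j=p$ are not $p$-integral, so the regrouping must be done with care). As it stands the proposal is a correct reduction followed by a conjecture, not a proof.
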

\begin{proof}
By replacing $\alpha$ and $\beta$ by $-\alpha$ and $-\beta$, (\ref{t1e}) is equivalent to
$$
\sum_{k=0}^{p-1}(-1)^k\binom{\alpha}{k}\binom{\beta}{k}\binom{-1-\alpha-\beta}{k}\equiv0\pmod{p^2},
$$
where $\{\alpha\}_p+\{\beta\}_p\geq p$.
Let us recall the Saalsch\"utz identity:
$$
{}_3F_2\bigg[\begin{matrix}a&b&c\\ &d&e\end{matrix}\bigg|1\bigg]=\frac{(d-a)_{|c|}(d-b)_{|c|}}{(d)_{|c|}(d-a-b)_{|c|}},
$$
where $c$ is a non-positive integer and
$
d+e=a+b+c+1$.

Below assume that $\alpha+\beta\geq p$. If $1\leq \alpha,\beta\leq p-1$, then by the Saalsch\"utz identity, we have
\begin{align*}
\sum_{k=0}^{p-1}(-1)^k\binom{\alpha}{k}\binom{\beta}{k}\binom{-1-\alpha-\beta}{k}\equiv&.\sum_{k=0}^{\infty}\frac{(-\alpha)_k(-\beta)_k(1+\alpha+\beta)_k}{(k!)^3}\\
=&\frac{(1+\beta)_{\alpha}(-\alpha-\beta)_{\alpha}}{(1)_{\alpha}(-\alpha)_{\alpha}}=\binom{\alpha+\beta}{\alpha}^2\equiv0\pmod{p^2}.
\end{align*}
Thus it suffices to prove that for any integer $s$
$$
\sum_{k=0}^{p-1}(-1)^k\binom{\alpha}{k}\binom{\beta}{k}\binom{-1-\alpha-\beta}{k}\equiv \sum_{k=0}^{p-1}(-1)^k\binom{\alpha}{k}\binom{\beta+sp}{k}\binom{-1-\alpha-\beta-sp}{k}\pmod{p^2},
$$
provided that $0\leq\alpha,\beta\leq p-1$ and $\alpha+\beta\geq p$.

Evidently,
\begin{align}
\binom{\alpha+sp}{k}&=\sum_{j=0}^{k}\binom{\alpha}{k-j}\binom{sp}{j}=\binom{\alpha}{k}+\sum_{j=1}^{k}\binom{\alpha}{k-j}\binom{(s-1)p+p-1}{j-1}\frac{sp}{j}\notag\\
&\equiv{\binom{\alpha}{k}+\sum_{j=1}^{k}\binom{\alpha}{k-j}\binom{s-1}{0}\binom{p-1}{j-1}\frac{sp}{j}}\notag\\
&\equiv{\binom{\alpha}{k}+sp\sum_{j=1}^{k}\binom{\alpha}{k-j}\frac{(-1)^{j-1}}{j}}\pmod{p^2}.
\end{align}
Then we have
\begin{align*}
&\sum_{k=0}^{\alpha}(-1)^k\binom{\alpha}{k}\binom{\beta+sp}{k}\binom{-1-\alpha-\beta-sp}{k}-\sum_{k=0}^{\alpha}(-1)^k\binom{\alpha}{k}\binom{\beta}{k}\binom{-1-\alpha-\beta}{k}\\
\equiv&
sp\sum_{k=1}^{\alpha}(-1)^k\binom{\alpha}{k}\sum_{j=1}^k\frac{(-1)^j}{j}\bigg(\binom{\beta}{k}\binom{-1-\alpha-\beta}{k-j}-\binom{-1-\alpha-\beta}{k}\binom{\beta}{k-j}\bigg)\pmod{p^2}.
\end{align*}
Applying the Saalsch\"utz identity again, we get that
\begin{align*}
&\sum_{j=1}^\alpha\frac{1}{j}\sum_{k=j}^{\alpha}(-1)^{k-j}\binom{\alpha}{k}\binom{-1-\alpha-\beta}{k}\binom{\beta}{k-j}\\
=&\sum_{j=1}^\alpha\frac{(-\alpha)_j(1+\alpha+\beta)_j}{j\cdot(j!)^2}\sum_{k=j}^{\alpha}\frac{(j-\alpha)_{k-j}(-\beta)_{k-j}(j+1+\alpha+\beta)_{k-j}}{((j+1)_{k-j})^2\cdot(k-j)!}\\
=&\sum_{j=1}^\alpha\frac{(-\alpha)_j(1+\alpha+\beta)_j}{j\cdot(j!)^2}\cdot\frac{(j+1+\beta)_{\alpha-j}(-\alpha-\beta)_{\alpha-j}}{(j+1)_{\alpha-j}(-\alpha)_{\alpha-j}}\\
=&\sum_{j=1}^\alpha\frac{(-1)^\beta}{j}\cdot\binom{-\alpha-1}{j+\beta}\binom{\alpha+\beta}{j+\beta}.
\end{align*}
Since $\alpha+\beta\geq p$, we have
\begin{align*}
\sum_{j=\beta+1}^{\alpha+\beta}\frac{(-1)^\beta}{j-\beta}\cdot\binom{-\alpha-1}{j}\binom{\alpha+\beta}{j}
\equiv&
\sum_{j=p}^{\alpha+\beta}\frac{(-1)^\beta}{j-\beta}\cdot\binom{-\alpha-1}{j}\binom{\alpha+\beta}{j}\\
\equiv&
\sum_{j=p}^{\alpha+\beta}\frac{(-1)^\beta}{j-\beta}\cdot\binom{-\alpha-1}{j-p}\binom{\alpha+\beta-p}{j-p}\\
\equiv&
\sum_{j=0}^{\alpha-(p-\beta)}\frac{(-1)^\beta}{j+p-\beta}\cdot\binom{-\alpha-1}{j}\binom{\alpha-(p-\beta)}{j}\pmod{p}.
\end{align*}
\begin{Lem}\label{l1} Suppose that $\alpha,\beta$ are positive integers and $\alpha\geq\beta$. Then
\begin{align}\label{abj}
\sum_{j=0}^{\alpha-\beta}\frac{1}{j+\beta}\cdot\binom{-\alpha-1}{j}\binom{\alpha-\beta}{j}
=\sum_{j=0}^{\beta-1}\frac{(-1)^{1+\alpha}}{j+1+\alpha-\beta}\cdot\binom{-\alpha-1}{j}\binom{\beta-1}{j}.
\end{align}
\end{Lem}
\begin{proof}
We shall use induction on $\alpha$ and $\beta$. When $\beta=1$, we have
\begin{align*}
\sum_{j=0}^{\alpha-1}\frac{1}{j+1}\cdot\binom{-\alpha-1}{j}\binom{\alpha-1}{j}=
-\frac1{\alpha}\sum_{j=0}^{\alpha-1}\cdot\binom{-\alpha}{j+1}\binom{\alpha-1}{\alpha-1-j}=
-\frac1{\alpha}\binom{-1}{\alpha}.
\end{align*}
So (\ref{abj}) is true for $\beta=1$.

Assume that $\beta>1$ and (\ref{abj}) holds for any smaller value of $\beta$.
It is easy to check that
\begin{align*}
&\frac{1}{j+1+\alpha-\beta}\cdot\binom{-\alpha-1}{j}\binom{\beta-1}{j}\\
=&\frac{\alpha+\beta-1}{\alpha^2}\cdot\binom{-\alpha}{j}\binom{\beta-1}{j}+\frac{(\beta-1)^2}{\alpha^2}\cdot\frac1{j+1+\alpha-\beta}\cdot\binom{-\alpha}{j}\binom{\beta-2}{j}.
\end{align*}
It follows for the induction hypothesis that
\begin{align}\label{abb}
&\sum_{j=0}^{\beta-1}\frac{1}{j+1+\alpha-\beta}\cdot\binom{-\alpha-1}{j}\binom{\beta-1}{j}\notag\\
=&\binom{\beta-1-\alpha}{\beta-1}\cdot\frac{\alpha+\beta-1}{\alpha^2}+
\frac{(\beta-1)^2}{\alpha^2}\sum_{j=0}^{\beta-2}\frac{1}{j+1+\alpha-\beta}\cdot\binom{-\alpha}{j}\binom{\beta-2}{j}\\
=&\binom{\beta-1-\alpha}{\beta-1}\cdot\frac{\alpha+\beta-1}{\alpha^2}+
\frac{(\beta-1)^2}{\alpha^2}\sum_{j=0}^{\alpha-\beta}\frac{(-1)^\alpha}{j+\beta-1}\cdot\binom{-\alpha}{j}\binom{\alpha-\beta}{j}\notag
\end{align}
Now letting $n=\alpha-\beta$, we only need to show that
\begin{align}\label{abn}
&\sum_{j=0}^{n}\binom{n}{j}\bigg(\frac{(\alpha-n-1)^2}{j+\alpha-n-1}\cdot\binom{-\alpha}{j}+\frac{\alpha^2}{j+\alpha-n}\cdot\binom{-\alpha-1}{j}\bigg)\notag\\
=&(-1)^{n}(2\alpha-n-1)\cdot\binom{\alpha-1}{n}.
\end{align}
We shall use induction on $n$.
There is nothing to do if $n=0$.
And in view of (\ref{abb}), we have
\begin{align*}
\sum_{j=0}^{n}\frac{1}{j+\alpha-n}\cdot\binom{n}{j}\binom{-\alpha-1}{j}
=\binom{n-\alpha}{n}\cdot\frac{\alpha+n}{\alpha^2}+
\frac{n^2}{\alpha^2}\sum_{j=0}^{n-1}\frac{1}{j+\alpha-n}\cdot\binom{-\alpha}{j}\binom{n-1}{j}.
\end{align*}
So
\begin{align*}
&\sum_{j=0}^{n}\binom{n}{j}\bigg(\frac{(\alpha-n-1)^2}{j+\alpha-n-1}\cdot\binom{-\alpha}{j}+\frac{\alpha^2}{j+\alpha-n}\cdot\binom{-\alpha-1}{j}\bigg)\\
=&\binom{n-\alpha}{n}\cdot(\alpha+n)+\binom{n-\alpha+1}{n}\cdot\frac{(\alpha+n-1)\cdot(\alpha-n-1)^2}{(\alpha-1)^2}\\
&+n^2\sum_{j=0}^{n-1}\frac{1}{j+\alpha-n}\cdot\binom{-\alpha}{j}\binom{n-1}{j}+
\frac{n^2(\alpha-n-1)^2}{(\alpha-1)^2}\sum_{j=0}^{n-1}\frac{1}{j+\alpha-n-1}\cdot\binom{1-\alpha}{j}\binom{n-1}{j}\\
=&\binom{n-\alpha}{n}\cdot(\alpha+n)+\binom{n-\alpha+1}{n}\cdot\frac{(\alpha+n-1)\cdot(\alpha-n-1)^2}{(\alpha-1)^2}\\
&+
\frac{n^2}{(\alpha-1)^2}\cdot (-1)^{n-1}(2\alpha-n-2)\cdot\binom{\alpha-2}{n-1}=(-1)^{n}(2\alpha-n-1)\cdot\binom{\alpha-1}{n}.
\end{align*}
\end{proof}
With help of Lemma \ref{l1}, we get that
\begin{align*}
\sum_{j=1}^\alpha\frac{(-1)^\beta}{j}\cdot\binom{-\alpha-1}{j+\beta}\binom{\alpha+\beta}{j+\beta}
\equiv&\sum_{j=0}^{p-\beta-1}\frac{(-1)^{1+\alpha}}{j+1+\alpha+\beta-p}\cdot\binom{-\alpha-1}{j}\binom{p-\beta-1}{j}\\
\equiv&\sum_{j=1}^\alpha\frac{(-\alpha)_j(-\beta)_j}{j\cdot(j!)^2}\cdot\frac{(j-\alpha-\beta)_{\alpha-j}(1+\beta)_{\alpha-j}}{(j+1)_{\alpha-j}(-\alpha)_{\alpha-j}}\\
=&\sum_{j=1}^\alpha\frac{1}{j}\sum_{k=j}^{\alpha}(-1)^{k-j}\binom{\alpha}{k}\binom{-1-\alpha-\beta}{k}\binom{\beta}{k-j}
\pmod{p}.
\end{align*}
This is the desired result.
\end{proof}

\end{document}